\journalname{JOTA}
\newtheorem{assumption}[theorem]{Assumption}
\let\la\langle
\let\ra\rangle
\newcommand\rnr{\reals^{n\times r}}
\newcommand\rnn{\reals^{n\times n}}
\newcommand{\BEAS}{\begin{eqnarray*}}
\newcommand{\EEAS}{\end{eqnarray*}}
\newcommand{\BEA}{\begin{eqnarray}}
\newcommand{\EEA}{\end{eqnarray}}
\newcommand{\BEQ}{\begin{equation}}
\newcommand{\EEQ}{\end{equation}}
\newcommand{\BIT}{\begin{itemize}}
\newcommand{\EIT}{\end{itemize}}
\newcommand{\BNUM}{\begin{enumerate}}
\newcommand{\ENUM}{\end{enumerate}}
\newcommand{\BA}{\begin{array}}
\newcommand{\EA}{\end{array}}
\newcommand{\reals}{{\mathbb R}}
\newcommand{\Tr}{\mathop{\bf Tr}}
\newcommand{\diag}{\mathop{\bf diag}}
\newcommand{\argmin}{\mathop{\rm argmin}}
\newcommand{\dom}{\mathop{\bf dom}}
\begin{document}

\title{Quartic First-Order Methods for Low-Rank Minimization}

\subtitle{}

\author{Radu-Alexandru Dragomir \and  Alexandre d'Aspremont \and J\'er\^ome Bolte }

\institute{Radu-Alexandru Dragomir, corresponding author \at
             Universit\'e Toulouse 1 Capitole \& D.I. \'Ecole Normale Sup\'erieure, Paris, France \\
             radu-alexandru.dragomir@inria.fr
           \and
           Alexandre d'Aspremont \at
           CNRS \& D.I. \'Ecole Normale Sup\'erieure, Paris, France \\
            aspremon@ens.fr.
            \and
            J\'er\^ome Bolte \at
            Universit\'e Toulouse 1 Capitole, Toulouse, France \\
            jerome.bolte@ut-capitole.fr
}

\date{Last revised on January 18, 2021}

\maketitle

\begin{abstract}
We study a general nonconvex formulation for low-rank minimization problems. We use recent results on non-Euclidean  first-order methods to provide efficient and scalable algorithms. Our approach uses the geometry induced by the Bregman divergence of well-chosen kernel functions; for unconstrained problems we introduce a novel family of Gram quartic kernels that improve numerical performance. 

Numerical experiments on Euclidean distance matrix completion and symmetric nonnegative matrix factorization show that our algorithms scale well and reach state of the art performance when compared to specialized methods.
\end{abstract}
\keywords{Bregman first-order methods \and Low-rank minimization \and Burer-Monteiro \and Matrix factorization \and Euclidean Distance matrix completion}
\subclass{90C06 \and 90C26}


\section{Introduction}

We consider the problem of minimizing a smooth convex function over the set of low-rank positive semidefinite matrices. Fundamental applications of this problem arise in various areas of data analysis including matrix completion \cite{Cand2008,Completion2010,Jain2012}, matrix sensing \cite{Recht2007}, Euclidean matrix completion \cite{Mishra2011,Fang2012}, phase retrieval \cite{Candes2015}, robust principal component analysis \cite{Chen2015}, to name a few. 

A popular approach to low-rank semidefinite minimization, known as the Burer-Monteiro formulation \cite{Burer2005}, consists in explicitly modeling the rank constraint by writing the matrix in a factorized form. This method is especially appealing for large-scale instances, since it requires storing much less variables than the standard semidefinite programming approaches; see \cite{Tu2015,Bhojanapalli2015,Zhao2015,Sun2015,Chen2015,Zheng2016,Park2016} and references therein. 

This formulation comes however with an important drawback, as the problem becomes nonconvex, even if the original objective is convex. Therefore, local optimization methods can generally only hope to find a stationary point, or at best a local minimum. Nevertheless, recent work shows convergence towards a global optimum for a close enough initialization \cite{Tu2015,Bhojanapalli2015,Park2016}, or under additional statistical assumptions about the problem \cite{Chen2015,Zheng2015,Ge2016}. Although these global optimality results often impose restrictive assumptions that may not be satisfied in practice, they help to explain why using local algorithms to solve Burer-Monteiro problem formulations often leads to satisfactory solutions in practice.

The most commonly used algorithm to solve these problem formulations is some variant of the proximal gradient method. However, a critical issue with gradient schemes is the choice of step sizes, which significantly impacts performance. This step size choice is closely related to the smoothness of the objective. In particular, when it has a $L$-Lipschitz continuous gradient with respect to the Euclidean norm, standard gradient methods can be applied with a step size lying in $]0,1/L]$. This smoothness assumption is used in the broad majority of theoretical analyses of gradient algorithms, yet there are many cases where it is not satisfied \cite{Bauschke2017,Bolte2018}. In particular, it does not hold for the general Burer-Monteiro low-rank problem, as we will show in what follows.

Of course, there is a way to circumvent this issue in classical Euclidean methods, by using an Armijo line search \cite{Lin2007}. However, in some cases, this naive line search strategy generates very small step sizes which in turn involve costly subroutines. Other approaches impose a step size that is only proven to be valid in a small neighborhood of the optimum \cite{Bhojanapalli2015,Park2016}. 

\paragraph{Non-Euclidean gradient methods.} We adopt an original approach  based on a recent line of work on non-Euclidean gradient methods \cite{Bauschke2017,Bolte2018,van} and subsequent work \cite{Lu2016}. {Unlike standard gradient descent that uses the uniform Euclidean geometry, the NoLips method, also known as Bregman/Mirror descent, uses the Bregman divergence induced by a well-chosen convex \textit{kernel} function. This allows the algorithm to take gradient steps that are more adapted to the geometry of the problem, advancing faster in directions where the gradient of the objective changes slowly, thus improving convergence speed. 
The kernel function is chosen so that the objective function satisfies a compatibility condition called \textit{relative smoothness} \cite{Bauschke2017,Lu2016}, which is a generalization of the usual smoothness assumption mentioned earlier.}

In our setting, the objective has a quartic growth, hence choosing the geometry induced by a quartic polynomial will prove to be efficient.

\paragraph{Contributions.}
In this work, we focus on deriving efficient algorithms to find stationary points of nonconvex low-rank problems. 
Our main contribution is to identify favorable non-Euclidean geometries for these problems, induced by well-chosen quartic kernels.

We first study a simple quartic \textit{norm} kernel that is compatible with various regularization terms. We then introduce a novel family of quartic kernels that we call \textit{Gram kernels}, which can be applied to unregularized problems. They provide richer geometries which greatly improve convergence speed with little impact on the iteration complexity. We also extend the NoLips scheme to Dyn-NoLips, allowing for adaptive step size strategies. 
 
To highlight the benefits of our approach, we study applications to symmetric nonnegative matrix factorization and Euclidean distance matrix completion and show competitive numerical performance compared to specialized algorithms for these problems.

\paragraph{Notations} For a square matrix $M$, we denote its trace $\Tr M = \sum_{i=1}^n M_{ii}$. For two matrices $X$ and $Y$ of same size, we denote the standard Euclidean inner product and norm by $\la X,Y \ra = \Tr(X^T Y)$ and $\|X\| = \sqrt{ \Tr(X^T X) }$. For a function $f:\reals^{n \times r} \rightarrow \reals$, we denote by $\nabla F(X)$ its gradient matrix $\nabla F(X)_{ij} = \frac{\partial F(X)}{\partial x_{ij}}$ and by $\nabla^2 F(X)[U,V]$ the second derivative at $X$ in the directions $U,V \in \reals^{n\times r}$. 
$I_r$ denotes the identity matrix of size $r\times r$. For two square matrices $X,Y$, we write $X \preceq Y$ if the matrix $Y-X$ is positive semidefinite. We write $\|\mathcal{A}\|_{\text{op}}$ for the operator norm of a linear application $\mathcal{A}$.

\section{Quartic Geometries for Low-Rank Minimization}\label{s:geometries}
\subsection{Problem Setup}

Let $n \geq 1$ and consider a low-rank semidefinite program, written
\begin{equation}\label{eq:semidef_problem}
\tag{SDP-r}
    \min F(Y) \quad \text{subject to } \, Y \succeq 0, \, \text{rank}(Y) \leq r
\end{equation}
in the variable $Y \in \reals^{n \times n}$, where $F$ is a smooth convex function and $r \leq n$ is the target rank.
The Burer-Monteiro formulation \cite{Burer2005} consists in representing $Y$ as $Y = XX^T$ to solve instead
\begin{equation}\label{eq:nolips_min_problem}
\tag{P}
    \min \Psi (X) := F(XX^T) + g(X)
\end{equation}
in the variable $X \in \reals^{n\times r}$, where $g$ is a \textit{simple} convex regularization function. $F$ is typically a quadratic loss function, and $g$ enforces penalties on the factor $X$ such as sparsity when choosing the $\ell_1$ norm, or nonnegativity when choosing the indicator function of the nonnegative orthant.
We will write $f:\reals^{n \times r} \rightarrow \reals$ the factorized function defined by \[f(X) := F(XX^T).\] 
Throughout the paper, we make the following standing assumptions.
\begin{assumption}\label{ass:standing}

\begin{enumerate}
    \item[\rm (a)] $F:\reals^{n\times n} \rightarrow \reals$ is a twice continuously differentiable function which is {$\mu_F$-strongly convex} and $L_F$-smooth, i.e.,
    \begin{align*}
    \la \nabla F(X) - \nabla F(Y), X-Y \ra & \geq \mu_F \|X-Y\|^2, \\
    \|\nabla F(X) - \nabla F(Y)\| &\leq L_F \|X-Y\| \quad \forall X,Y \in \reals^{n \times n},
    \end{align*}
    \item[\rm (b)] $g :\reals^{n\times n} \rightarrow \reals \cup \{+\infty\}$ is a closed convex proper function,
    \item[\rm (c)] $\min_{\rnr} \Psi > -\infty$.
\end{enumerate}
\end{assumption}
{Our analysis will involve the following lemma.
\begin{lemma}\label{lemma:muf}
    Let $F:\reals^{n \times n}\rightarrow \reals$ be a twice differentiable $\mu_F$-strongly convex and $L_F$-smooth function. Then, the function $G:=F-\frac{\mu_F}{2}\|\cdot\|^2$ is convex and $(L_F - \mu_F)$-smooth.
\end{lemma}
\begin{proof}
It suffices to use the second-order characterization \cite{Nesterov2004} and notice that, for ${Y,U \in \reals^{n \times n}}$, we have $\nabla^2 G(Y)[U,U] =\nabla^2 F(Y)[U,U] - \mu_F\|U\|^2$ and hence
\[ \mu_F \|U\|^2 \leq \nabla^2 F(Y)[U,U] \leq L_F \|U\|^2 \,\implies\, 0 \leq \nabla^2 G(Y)[U,U] \leq (L_F-\mu_F) \|U\|^2. \]
\qed
\end{proof}} 
\subsection{Relative Smoothness and the Bregman Iteration Map} 
In this section, we recall the framework of \cite{Bauschke2017,Bolte2018} to derive non-Euclidean gradient methods.

The first essential step is the choice of a \textit{distance kernel}. In our context, we choose a differentiable strictly convex function $h:\reals^{n \times r} \rightarrow \reals$, with ${\dom h = \reals^{n \times r}}$ (although more general distance kernels can be used). The distance kernel $h$ induces in turn a \textit{Bregman distance}
\BEQ
    D_h(X,Y) = h(X) - h(Y) - \la \nabla h(Y), X-Y \ra.
\EEQ
Note that $D_h$ is  not a proper distance, it is sometimes referred to as a \textit{Bregman divergence}. However $D_h$ enjoys a distance-like separation property: $D_h(X,X) = 0$ and $D_h(X,Y) > 0$ for $X \neq Y$. The choice of a distance kernel suited to the function $f$ is guided by the following relative smoothness condition, also called generalized Lipschitz property.

\begin{definition}[Relative smoothness \cite{Bauschke2017}]
We say that a differentiable function $f : \reals^{n \times r} \rightarrow \reals$ is $L$-smooth relatively to the distance kernel $h$ if there exists $L > 0$ such that for every $X,Y \in \rnr$,
\BEQ\label{eq:relative_smoothness}\tag{RelSmooth}
    f(X) \leq f(Y) + \la \nabla f(Y), X-Y \ra + L\, D_h(X,Y).
\EEQ
\end{definition}
For twice differentiable functions, relative smoothness has an elementary characterization:  $f$ is $L$-smooth relatively to $h$ if and only if 
\BEQ\label{eq:hessian_cond}
     \nabla^2 f(X)[U,U] \leq L\, \nabla^2 h(X)[U,U], \quad \forall X,U \in \reals^{n \times r}
\EEQ
{where $\nabla^2 f(X)[U,U]$ denotes the second derivative of $f$ at $X$ in the direction $U$.}
Notice that if $h(X) = \frac{1}{2} \|X\|^2$, then $D_h(X,Y) = \frac{1}{2}\|X-Y\|^2$ and we recover the standard Euclidean descent lemma that would be implied by Lipschitz continuity of the gradient of $f$. 

\paragraph{Bregman iteration map} Now that we are equipped with a non-Euclidean geo\-me\-try generated by $h$, we define the Bregman proximal iteration map with step size $\lambda$ as follows.
\BEQ\label{eq:bregman_prox}
T_\lambda(X) = \argmin_{U \in \reals^{n\times r}} \left\{ g(U) + f(X) + \la \nabla f(X), U-X \ra + \frac{1}{\lambda} D_h(U,X) \right\},
\EEQ
which consists in minimizing a surrogate for $\Psi$ where $f$ has been replaced by the upper approximation given by~\eqref{eq:relative_smoothness} and the nonsmooth part $g$ is kept intact, generalizing thus the approach used in the proximal gradient method.
The relative smoothness condition ensures that this operation decreases the objective $\Psi$ when $\lambda \in ]0,1/L]$.
This iteration map is the basic brick for non-Euclidean methods \`a la Bregman.  The simplest method is NoLips \cite{Bauschke2017} and its extension Dyn-NoLips (Algorithm \ref{algo:nolips}), which simply amounts to iterating $X^{k+1} = T_{\lambda_k}(X^k)$, but other possibilities exist using momentum ideas \cite{Auslender2006,Hanzely2018,Mukkamala2019}.

\subsection{The Quartic Geometry}\label{ss:quartic}
In order to provide some insight into the quartic geometry of our problem, let us consider the example where $F$ is a \textit{quadratic} function, i.e.,
\BEQ\label{eq:quad_f}
F(Y) = \frac{1}{2} \la \mathcal{A}Y, Y\ra + \la B,Y \ra \quad \forall \,Y \in \rnn,
\EEQ
where $B \in \rnn$ and $\mathcal{A}:\rnn \rightarrow \rnn$ is some linear map. Then, $f$ writes
\[f(X) = F(XX^T) = \frac{1}{2}\la \mathcal{A}(XX^T) , XX^T\ra + \la BX,X\ra \quad \forall\, X \in \rnr.\]
Clearly, $f$ is a \textit{quartic} function and its gradient is not Lipschitz continuous on $\rnr$, as the Hessian ``grows" to infinity when $\|X\|\rightarrow \infty$. In other words, \eqref{eq:relative_smoothness} does not hold with the Euclidean kernel $h = \frac{1}{2}\|\cdot\|^2$. We now show that relative smoothness holds with a family of well-chosen quartic kernels, which are more adapted to the geometry of $f$.

\subsubsection{The Quartic Norm Kernel}\label{ss:kernels}
We begin with the simplest quartic kernel, which depends solely on the Frobenius norm of $X$. Define the \textit{norm kernel} $h_N$ as
\begin{equation}\label{eq:norm_kernel}
    h_N(X) =  \frac{\alpha}{4}\|X\|^4 + \frac{\sigma}{2}\|X\|^2 \quad \forall \, X \in \rnr,
\end{equation}
where $\alpha,\sigma > 0$ are fixed parameters. Note that this kernel is not new by itself, as it has been already studied in \cite{Bolte2018} for vectors in $\reals^n$. Our first contribution is to show that it is adapted to every function of our class of problems. 

\begin{proposition}[Norm kernel]\label{prop:rel_smoothness}
The function $f$ is $1$-smooth relative to the norm kernel $h_N$ for $\alpha \geq 6L_F$ and $\sigma \geq 2\|\nabla F(0)\|$.
\end{proposition} 
\begin{proof}
 As $F$ is twice differentiable, then so if $f$ and we can use the Hessian characterization \eqref{eq:hessian_cond}. For $X, U \in \reals^{n \times r}$, the second derivative of $h_N$ is written
\begin{equation}\label{eq:hess_h}
\begin{split}
    \nabla^2 h_N(X)[U,U] &= \alpha \left(\|X\|^2 \|U\|^2 + 2 \la X, U \ra^2 \right)  + \sigma \|U\|^2 \\
         &\geq \alpha \|X\|^2 \|U\|^2 + \sigma \|U\|^2.
\end{split}
\end{equation}
On the other hand, the second derivative of $f$ is 
\BEQ \label{eq:expr_hess_f}
    \nabla^2 f(X)[U,U] = \nabla^2 F(XX^T)[UX^T + XU^T, UX^T + XU^T] + 2 \la \nabla F(XX^T), UU^T \ra.
\EEQ
Since $F$ has a Lipschitz continuous gradient, the standard second derivative inequality yields 
\[
\nabla^2 F(XX^T)[UX^T + XU^T, UX^T + XU^T] \leq L_F \|UX^T + XU^T\|^2.
\]
Now, the second term can be bounded by using the triangle inequality, the Cauchy-Schwarz inequality and the gradient Lipschitz property, to get
\[
    \begin{split}
      \la \nabla F(XX^T), UU^T \ra &= \la  \nabla F(0), UU^T \ra + \la \nabla F(XX^T) - \nabla F(0), UU^T \ra\\
        &\leq \|\nabla F(0)\| \, \|U\|^2 + \|\nabla F(XX^T) - \nabla F(0)\| \, \|UU^T\| \\
        &\leq \Big( \|\nabla F(0)\| + L_F\|XX^T\| \Big) \|U\|^2
    \end{split}
\]
hence
\begin{equation}\label{eq:hess_f_maj}
    \begin{split}
        \nabla^2 f(X)[U,U] &\leq L_F \|UX^T + XU^T\|^2 + 2 \left(L_F\|XX^T\| + \|\nabla F(0)\| \right) \|U\|^2 \\
        &\leq 2L_F \left( \|UX^T\|^2 + \|XU^T\|^2 \right) + 2 \left(L_F\|XX^T\| + \|\nabla F(0)\| \right) \|U\|^2 \\
        &\leq 6L_F \|X\|^2 \|U\|^2 + 2 \|\nabla F(0)\| \|U\|^2 \\
        &\leq \alpha \|X\|^2 \|U\|^2 + \sigma \|U\|^2
    \end{split}
\end{equation}   
where we used the submultiplicative property of the Frobenius norm, and our choice of parameters $\alpha,\sigma$. Combining \eqref{eq:hess_h} and \eqref{eq:hess_f_maj} gives that \[{\nabla^2 f(X)[U,U] \leq \nabla^2 h_N(X)[U,U]}\] for all $X, U \in \reals^{n\times r}$, hence that $f$ is $1$-smooth relatively to $h$ \cite{Bauschke2017}.\qed
\end{proof}
The Bregman iteration map \eqref{eq:bregman_prox} associated with the kernel $h_N$ can be computed easily in closed form. We give its expression in the unconstrained case~\cite{Bolte2018}. 
\begin{proposition}[Bregman iteration map for $h_N$, unconstrained case]
    Assume that there is no penalty term, i.e., that $g \equiv 0$. The Bregman iteration map of the norm kernel $h_N$ with step size $\lambda > 0$ is given by
    \[
         T_\lambda(X) = \frac{1}{\tau_\sigma(\alpha \|U\|^2)} U
    \] where \[
           U = \nabla h_N(X) - \lambda \nabla f(X) = (\alpha \|X\|^2 + \sigma)X - \lambda \nabla f(X)
    \]
    and $\tau_\sigma(c)$ denotes the unique real solution $z$ to the cubic equation $z^2(z-\sigma) = c$.
\label{prop:mirror_norm}
\end{proposition}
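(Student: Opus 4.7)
The plan is to exploit the first-order optimality condition of the Bregman subproblem, use the explicit form of $\nabla h_u$, and reduce the resulting matrix equation to a scalar cubic by an alignment argument.

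First I would write out the optimality condition for $T_\lambda(X)$. Since $g\equiv 0$, the objective in \eqref{eq:bregman_prox} is differentiable and strictly convex in $U$ (because $h_u$ is strictly convex), so the minimizer $V := T_\lambda(X)$ is uniquely characterized by the stationarity relation
\[
\nabla f(X) + \tfrac{1}{\lambda}\bigl(\nabla h_u(V) - \nabla h_u(X)\bigr) = 0,
\]
equivalently $\nabla h_u(V) = \nabla h_u(X) - \lambda \nabla f(X) = U$, where I have adopted the paper's notation for the right-hand side. A direct computation gives $\nabla h_u(W) = (\alpha\|W\|^2 + \sigma)W$ for any $W$, so the equation to solve is
\[
(\alpha \|V\|^2 + \sigma)\, V = U.
\]

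Next, I would argue that $V$ must be a nonnegative scalar multiple of $U$. Indeed, the coefficient $\alpha\|V\|^2 + \sigma$ is strictly positive, so writing $V = c\, U$ with $c \geq 0$ is forced (and the case $U=0$ trivially yields $V=0$). Substituting $V = cU$ reduces the matrix equation to the scalar identity
\[
c\bigl(\alpha c^{2}\|U\|^2 + \sigma\bigr) = 1.
\]
Setting $z = 1/c$ and clearing denominators turns this into exactly the cubic $z^2(z - \sigma) = \alpha\|U\|^2$ that defines $\tau_\sigma$, and then $V = c\, U = U/z = \frac{1}{\tau_\sigma(\alpha \|U\|^2)}\,U$ as claimed.

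The only subtle step is justifying that $\tau_\sigma(c)$ is well-defined, i.e. that the cubic $p(z) = z^3 - \sigma z^2 - c$ admits a unique real solution for every $c \geq 0$. I would handle this by analyzing $p$: its derivative $p'(z) = z(3z - 2\sigma)$ has critical points at $0$ and $2\sigma/3$, with $p(0) = -c \leq 0$ and $p(2\sigma/3) = -4\sigma^3/27 - c < 0$, so the local maximum of $p$ is non-positive. Hence $p$ is increasing past its local minimum and crosses zero exactly once, in $[\sigma, +\infty)$. This gives existence and uniqueness of the relevant root and, crucially, the positivity of $z$ (so $c = 1/z > 0$ is admissible). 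The rest is essentially routine algebra; the only obstacle worth flagging is making sure the alignment step $V = cU$ is correctly justified and that the sign/uniqueness discussion of the cubic is spelled out, since a naive count could worry about multiple real roots.
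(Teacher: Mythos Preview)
Your argument is correct. The paper does not actually supply its own proof of this proposition; it simply states the formula and attributes it to \cite{Bolte2018}, so there is no in-paper derivation to compare against. Your route---write the stationarity condition $\nabla h_u(V)=U$, observe that $(\alpha\|V\|^2+\sigma)V=U$ forces $V$ to be a nonnegative scalar multiple of $U$, then reduce to the scalar cubic---is exactly the natural one and matches the spirit of the cited reference.

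Two cosmetic remarks. First, you use the symbol $c$ twice with different meanings (once for the scaling factor in $V=cU$, once for the argument of $\tau_\sigma$); renaming one of them would avoid ambiguity. Second, your uniqueness discussion is fine for $c>0$, but when $c=0$ the cubic $z^2(z-\sigma)=0$ has the roots $0,0,\sigma$, so ``unique real solution'' is literally false in that degenerate case. You already dealt with $U=0$ separately, so the formula is still valid (any nonzero value of $\tau_\sigma(0)$ gives $T_\lambda(X)=0$), but it is worth a one-line remark that the statement's uniqueness claim tacitly assumes $c>0$.
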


Note that $\tau_\sigma(c)$ can be computed in closed form using Cardano's method
\BEQ \label{eq:cardano}
\tau_\sigma(c) = \frac{\sigma}{3} + \sqrt[3]{\frac{c + \sqrt{\Delta}}{2} + \frac{\sigma^3}{27}}
    + \sqrt[3]{\frac{c - \sqrt{\Delta}}{2} + \frac{\sigma^3}{27}}
    \,\,\text{where}\,\,
    \Delta =c^2 + \frac{4}{27} c \sigma^3.
\EEQ
Compared to a standard gradient iteration, the additional operations are elementary and have a minimal impact on the arithmetic complexity.


\paragraph{Constraints and regularization terms.} Following the ideas in \cite{Bolte2018}, the Bregman iteration map of $h_N$ can also be easily computed in closed form when $g$ is the $\ell_1$ norm or the $\ell_0$ pseudonorm. As we will show in Section~\ref{ss:SymNMF}, this is also elementary when $g$ is the indicator function of the nonnegative orthant.

\subsubsection{A More Refined Kernel for Unregularized Problems: the Gram Kernel}\label{ss:gram}
While the kernel $h_N$ is simple and compatible with many penalties $g$, a better kernel can be derived for unconstrained instances by considering a richer geometry involving the Gram matrix. Define the \textit{Gram kernel} as
\begin{equation}\label{eq:gram_kernel}
    h_G(X) = \frac{\alpha}{4}\|X\|^4 + \frac{\beta}{4}\|X^T X\|^2 + \frac{\sigma}{2}\|X\|^2 \quad \forall \, X \in \rnr,
\end{equation}
where $\alpha, \beta \geq 0, \,\sigma > 0$ are given parameters. The Gram kernel is more refined than the previous norm kernel since it incorporates some nonisotropic information with the $\|X^TX\|^2$ term. {To show where this term stems from, observe that following Lemma \ref{lemma:muf}, $F$ can be decomposed as $F = \frac{\mu_F}{2}\|\cdot\|^2 + \tilde{F}$ where $\tilde{F}$ is $(L_F-\mu_F)$-smooth. Hence $f$ writes
\BEQ\label{eq:f_decomp}
f(X) = F(XX^T) = \frac{\mu_F}{2} \|X X^T\|^2 + \tilde{F}(XX^T).
\EEQ
Since $\|XX^T\|^2 = \|X^TX\|^2$, the first term can be directly incorporated into the kernel, which allows to prove a tighter relative smoothness inequality.}
\begin{proposition}[Gram kernel]\label{prop:rel_smoothness_gram}  $f$ is $1$-smooth relatively to the Gram kernel $h_G$ {for $\alpha \geq 2(L_F-\mu_F)$, $\beta \geq 2 L_F$ and $\sigma \geq 2\|\nabla F(0)\|$.}
\end{proposition} 

\begin{proof} 
This amounts to refine the analysis of the proof of Proposition~\ref{prop:rel_smoothness}. Let $X,U \in \reals^{n\times r}$. The second derivative of $h_G$ at $X$ in the direction $U$ writes
\begin{equation}
    \begin{split}
        \nabla^2 h_G(X)[U,U] &= \alpha \left(\|X\|^2 \|U\|^2 + 2 \la X, U \ra^2 \right)\\
        &\quad\quad+ \beta \left( \frac{1}{2}\|UX^T + XU^T\|^2 + \|U^T X\|^2 \right)  + \sigma \|U\|^2 \\
        &\geq \alpha \|X\|^2 \|U\|^2 + \beta \left( \frac{1}{2} \|UX^T + XU^T\|^2 + \|U^T X\|^2  \right) + \sigma \|U\|^2.
    \end{split}
\end{equation}
On the other hand, following~\eqref{eq:expr_hess_f} the second derivative of $f$ satisfies
\begin{equation*}
        \nabla^2 f(X)[U,U] \leq L_F \|UX^T + XU^T\|^2 + 2\la \nabla F(XX^T),UU^T\ra
\end{equation*}
To bound the second term, we use Lemma \ref{lemma:muf} which states that the function $G(Y):=F(Y)-\mu_F \|Y\|^2 / 2$ is convex and smooth with constant $L_F-\mu_F$. Using the gradient Lipschitz property of $G$ yields
\begin{align*}
    \la \nabla F(XX^T),UU^T\ra &= \la \nabla F(0), UU^T\ra + \mu_F \la XX^T, UU^T \ra \\
    &\qquad+ \la \nabla F(XX^T) - \nabla F(0) - \mu_F (XX^T - 0), UU^T  \ra \\
    &= \la \nabla F(0),UU^T \ra + \mu_F \|U^T X\|^2 + \la \nabla G(XX^T)-  \nabla G(0),UU^T \ra \\
    &\leq \|\nabla F(0)\| \,\|U\|^2 + \mu_F \|U^T X\|^2 + (L_F - \mu_F) \|XX^T\|\, \|UU^T\|\\
    &\leq \|\nabla F(0)\| \,\|U\|^2 + L_F \|U^T X\|^2 + (L_F - \mu_F) \|X\|^2\|U\|^2,
 \end{align*}
using that $\mu_F \leq L_F$, and so we have
\begin{align*}
    \nabla^2 f(X)[U,U] &\leq 2(L_F - \mu_F) \|X\|^2 \|U^2\|+ L_F \|UX^T + XU^T\|^2 + 2 L_F \|U^T X\|^2\\
    &\qquad+ 2 \|\nabla F(0)\|\, \|U\|^2\\
    &\leq  \alpha \|X\|^2 \|U\|^2 + \frac{\beta}{2} \|UX^T + XU^T\|^2 + \beta \|U^T X\|^2   + \sigma \|U\|^2\\
    &\leq \nabla^2 h_G(X)[U,U]
\end{align*}
which shows that, for the prescribed choice of $\alpha,\beta,\sigma$, the function $f$ is 1-smooth relatively to $h_G$.
\qed
\end{proof}
{
\paragraph{Approximation quality for well-conditioned $F$.} Let us illustrate the advantage of the Gram kernel when $F$ is well-conditioned. For simplicity, assume here that $F$ is a quadratic function, as in \eqref{eq:quad_f}, i.e., $F(Y) = \frac{1}{2}\la \mathcal{A}Y,Y \ra + \la B, Y \ra$ where $\mathcal{A}$ is a positive semidefinite linear operator on $\reals^{n \times r}$, and hence $f$ has a \textit{quartic} and a \textit{quadratic} term
\[f(X) = \frac{1}{2}\la \mathcal{A}(XX^T),XX^T \ra + \la B, XX^T \ra.
\]
The gap between $f$ and $h_G$ with the choice of coefficients prescribed by Proposition \ref{prop:rel_smoothness_gram} writes, for $X \in \rnr$,
\BEQ
\begin{split}
    h_G(X) - f(X) &= \frac{(L_F - \mu_F)}{2}\|X\|^4 + \frac{L_F}{2}\|X^TX\|^2 + \|\nabla F(0)\|\,\|X\|^2 \\
    &\qquad-  \frac{1}{2}\la \mathcal{A}(XX^T),XX^T \ra - \la B X, X \ra \\
    &= \underbrace{\frac{(L_F - \mu_F)}{2}\|X\|^4 + \frac{1}{2} \la (L_F I - \mathcal{A})(XX^T), XX^T\ra}_{d_4(X)} \\
    &\qquad + \underbrace{\la (\|\nabla F(0)\| I-B)X, X\ra}_{d_2(X)}
\end{split}
\EEQ
where we separated the gap into a quartic term $d_4$ and a quadratic term $d_2$. It can be seen from \eqref{eq:hessian_cond} that the quality of approximation of the kernel is given by the difference of the Hessians. Focusing on the quartic part, the Hessian difference is
\begin{align*}
    \nabla^2 d_4(X)[U,U] &= 2(L_F - \mu_F)\left( \|X\|^2 \|U\|^2 + 2 \la X,U\ra^2 \right) +  2 \la (L_F I - \mathcal{A})(XX^T),UU^T \ra  \\
    &\qquad + \la (L_F I - \mathcal{A})(UX^T + XU^T),UX^T + XU^T\ra\\
    &\leq 6(L_F - \mu_F)\|X\|^2 \|U\|^2  \\
    &\qquad + \| L_F I - \mathcal{A} \|_{\text{op}}\left( 2 \|XX^T\|\, \|UU^T\| + \|UX^T + XU^T\|^2\right)
\end{align*}
for $X, U \in \reals^{n\times r}$. 
Recalling that $F$ is $L_F$-smooth and $\mu_F$-strongly convex, we have that ${\|L_FI - \mathcal{A}\|_{\text{op}}}\leq (L_F - \mu_F)$, therefore
\begin{equation*}
\begin{split}
    \nabla^2 d_4(X)[U,U] &\leq (L_F - \mu_F)\left( 6\|X\|^2 \|U\|^2 + 2 \|XX^T\|\,\|UU^T\| + \|UX^T + XU^T\|^2 \right)\\
    &\leq 12 L_F (1 - \frac{\mu_F}{L_F}) \|X\|^2 \|U\|^2
\end{split}
\end{equation*}
which shows that the quality of approximation of the quartic part of $f$ by the Gram kernel depends on the condition number $\kappa_F := L_F /\mu_F$ of $F$. 
Note that one could actually refine the analysis by replacing $\kappa_F$ with the condition number of $F$ restricted to the set of matrices of rank at most $2r$, which can be much smaller.
This is the case when the linear map $\mathcal{A}$ satisfies the \textit{restricted isometry property} (RIP), which occurs with high probability in matrix sensing applications with a sufficiently large number $n$ of samples \cite{Recht2007,Meka2009,Jain2012}.
 }

\paragraph{Computing the iteration map.} We show now that, when there is no penalty term $g$, the Bregman iteration map of $h_G$ can be computed efficiently, as it involves solving an easy quartic minimization subproblem of size $r$.
\begin{proposition}[Gram's iteration map]
\label{prop:gram_gradient_map} Assume that $g \equiv 0$.
For  $X \in \reals^{n \times r}$, the Bregman iteration map of $f$ for the Gram kernel $h_G$ with step size $\lambda > 0$, called Gram's iteration map, is given by
    \begin{equation*}
        T_\lambda(X) = V \left[\alpha \Tr(Z) I_r + \beta Z + \sigma I_r \right]^{-1}  
    \end{equation*}
 where the matrices $V,Z$ are computed through the routine:\begin{itemize}
\item  Set  $V = \nabla h_G(X) - \lambda \nabla f(X)$,
\item  diagonalize $V^TV$ as $V^T V = P^T D P$ where $P \in \mathcal{O}_r$ and $D = \diag(\eta_1^2,\dots,\eta_r^2)$, 
\item let $\mu = (\mu_1, \dots,\mu_r)$ be the unique solution of the convex minimization problem
    \begin{equation*}
        \min_{x \in \reals^r} \phi(x) := \frac{\alpha}{4}\|x\|^4 + \frac{\beta}{4}\sum_{i=1}^r x_i^4 + \frac{\sigma}{2}\|x\|^2 - \sum_{i=1}^r \eta_i x_i,
    \end{equation*}
\item finally set $Z = P^T \diag\big[ \mu_1^2,\dots, \mu_r^2 \big] P.$
\end{itemize}
\end{proposition}

\begin{proof}
When $g \equiv 0$, The Bregman iteration map of $h_G$ writes, for $X \in \reals^{n\times r}$,
\begin{equation}\label{eq:breg_map_gram}
    \begin{split}
        T_\lambda(X) &= \argmin_{U \in \reals^{n\times r}} \left\{\la \nabla f(X), U-X \ra + \frac{1}{\lambda} D_{h_G}(U,X) \right\} \\
        &= \argmin_{U \in \reals^{n\times r}} \left\{ h_G(U) - \la V,U \ra \right\} \\
    \end{split}
\end{equation}
where we remove constant terms and defined $V := \nabla h_G(X) - \lambda \nabla f(X)$.
Write for the sake of clarity $U^\star := T_\lambda(X)$. The optimization problem \eqref{eq:breg_map_gram} is strictly convex and the unique solution $U^\star$ satisfies $\nabla h_G(U^\star) = V$, meaning that
\begin{equation}\label{eq:Z_cond}
    U^\star \left(\alpha \|U^\star\|^2 I_r + \beta U^{\star T} U^\star + \sigma I_r \right) = V.
\end{equation}
Define $Z := U^{\star T} U^\star \in \reals^{r \times r}$. Then, the knowledge of $Z$ determines $U^\star$, since $\|U^\star\|^2 = \Tr(Z)$ and therefore $U^\star = V (\alpha \Tr(Z) I_r + \beta Z + \sigma I_r)^{-1} $.

Now, taking \eqref{eq:Z_cond} and multiplying by its transpose implies that 
\begin{equation}\label{eq:z_v_eq}
    (\alpha \|U^\star\|^2 I_r  + \beta Z + \sigma I_r)^2 Z = V^T V.
\end{equation}
This shows that $V^T V$ is a polynomial in $Z$, and therefore that they admit the same eigenvectors. Write the diagonalization $V^T V = P^T \diag(\eta_1^2,\dots,\eta_r^2) P$ and $Z = P^T \diag(\mu_1^2,\dots,\mu_r^2) P$ where $P \in \mathcal{O}_r$ and $\mu_i,\eta_i \geq 0$ for $i = 1\dots r$. It follows from diagonalizing \eqref{eq:z_v_eq} and taking the square root that
\begin{equation}
    \left( \alpha \left(\sum_{j=1}^r \mu_j^2 \right)  + \beta \mu_i^2 + \sigma \right) \mu_i = \eta_i \quad \forall i = 1,\dots,r 
\end{equation}
This is exactly the first-order optimality condition on $\mu = (\mu_1,\dots,\mu_r)$ for the problem
\begin{equation}\label{eq:gram_subprob_reminder}
    \mu = \argmin_{x \in \reals^r} \frac{\alpha}{4}\|x\|^4 + \frac{\beta}{4}\sum_{i=1}^r x_i^4 + \frac{\sigma}{2}\|x\|^2 - \sum_{i=1}^r \eta_i x_i.
\end{equation}
Note that we do not need to enforce the nonnegativity constraint on $x$, since we chose $\eta_i \geq 0$ it follows that the optimal solution will be nonnegative. Hence, we can reconstruct $Z$ from the diagonalization of $V^T V$ and the solution of Problem \eqref{eq:gram_subprob_reminder}, and thus we get the procedure described in the theorem for computing $U^* = T_\lambda(X)$. \qed
\end{proof}

\paragraph{Complexity} Note that the order of multiplication is important: we only need to compute the eigendecomposition of $V^T V$, which is of size $r \times r$. We additionally need to solve a small minimization problem of size $r$, which can be done efficiently using the quartic NoLips algorithm with norm kernel (see Appendix \ref{ss:gram_kernel_algo} for implementation details). Due to this, the complexity of computing the Bregman iteration map of $h_G$ is $O(nr^2 + r^3 + K r)$, where $K$ is the number of iterations needed to solve the subproblem.
Since $r$ is usually much smaller than $n$ by several orders of magnitude, the main computational bottleneck remains in most applications computing the gradient $\nabla f(X)$.

{\subsubsection{Comparison: how to choose the most appropriate kernel}
In order to devise efficient methods, one should search for the kernel $h$ such that the upper approximation of $f$ in~\eqref{eq:relative_smoothness} is \emph{as tight as possible}, or, equivalently, such that the Hessian of the residual $Lh-f$ is small. On the other hand, $h$ has to be simple enough so that the iteration map \eqref{eq:bregman_prox} is \emph{easy to compute} (which precludes choosing $h=f$, as the iteration would be as hard to solve as the initial problem). This trade-off is key in choosing the appropriate kernel. Let us review these two conflicting criteria in our situation.

\paragraph{Complexity of the Bregman iteration map.} For the norm kernel $h_N$, one iteration involves computing the gradient of $f$, then solving a simple scalar equation. The Gram kernel $h_G$ involves solving a subproblem which requires $O(nr^2 + r^3)$ additional operations. This overhead is negligible for the typical regime where $r \ll n$; however, the iterate can be computed easily only for unconstrained problems.

\paragraph{Quality of Hessian approximation.} We showed in Section \ref{ss:gram} that the quality of the approximation of the quartic component of $f$ by the Gram kernel is bounded by $O(1-\mu_F/L_F)$. Therefore, it is expected to show good performance when $F$ is sufficiently well-conditioned. The norm kernel, however, has no such property, as its approximation of $f$ is much coarser. The difference stems from the supplementary $\|X^T X\|^2$ term, which can be much smaller than $\|X\|^4$, especially when the columns of $X$ are nearly orthogonal.

Note that even if $F$ is not globally strongly convex or $\mu_F$ is unknown, the Gram kernel can take advantage of local strong convexity through adaptive step sizes, as we show in the sequel.

}

\section{Algorithms for Quartic Low-Rank Minimization}

Now that we are equipped with a non-Euclidean geometry induced by one of the kernels $h_N$ and $h_G$, we are ready to define the minimization scheme Dyn-NoLips in Algorithm \ref{algo:nolips}. It extends the NoLips algorithm from \cite{Bolte2018} to allow step sizes larger than the theoretical value $1/L$.

\begin{algorithm}[H]
	\begin{algorithmic}
		\STATE {\bfseries Input:} A distance kernel $h$ such that $f$ is smooth relatively to $h$ and a maximal step size $\lambda_{\rm max}$
		\STATE Initialize $X^0 \in \reals^{n\times r}$ such that $\Psi(X^0) < \infty$.
		\FOR{k = 1,2,\dots}
		\STATE Choose a step size $\lambda_k \leq \lambda_{\rm max}$ such that the sufficient decrease condition \eqref{eq:suff_decrease} holds
		\STATE 
		    Set $X^k = T_{\lambda_k}(X^{k-1}) $
		\ENDFOR
	\end{algorithmic}
	\caption{Dyn-NoLips}
	\label{algo:nolips} 
\end{algorithm}

\paragraph{Step size choice} The step size $\lambda_k$ is chosen so that the new iterate $X^k = T_{\lambda_k}(X^{k-1})$ satisfies
\begin{equation}\label{eq:suff_decrease}
f(X^k) \leq f(X^{k-1}) + \la \nabla f(X^{k-1}), X^k - X^{k-1} \ra + \frac{1}{\lambda_k} D_h(X^k, X^{k-1}).
\end{equation}
There are two ways to ensure this condition holds.
\begin{itemize}
    \item \textbf{Fixed step size.} Since $f$ is $L$-smooth relatively to $h$, \eqref{eq:suff_decrease} holds as soon as $0 < \lambda_k \leq {1}/{L}$.
  \item \textbf{Dynamical step size.} In some cases, the relative Lipschitz constant might be too conservative, and better numerical performance can be achieved by taking larger steps. We therefore can use a dynamical strategy for extending the step size, ensuring that \eqref{eq:suff_decrease} holds at each iteration. There are many strategies to efficiently adjust the step size; see, e.g., \cite{Nesterov2007}. {In our case, we choose a simple strategy similar in spirit to the Armijo line search: at iteration $k$, start with a tentative step size $\lambda_k$, then find the smallest integer $j$ such that \eqref{eq:suff_decrease} is satisfied with step size $2^{-j}\lambda_k$.
  Then, set $\lambda_{k+1} = 2^{-j+1} \lambda_k$. }
\end{itemize}

\paragraph{Convergence to a stationary point.} We now extend the theoretical convergence results from \cite{Bolte2018} to handle the dynamical step size strategy.

\begin{theorem}[Convergence results]\label{thm:nolips}
Let $\{X^k\}_{k \geq 0}$ be the sequence generated by Algorithm \ref{algo:nolips}. Assume that 
    \begin{enumerate}
        \item $f$ is $L$-smooth relatively to a distance kernel $h$ such that $h$ is strongly convex and twice continuously differentiable on $\reals^{n\times r}$, and the penalty function $g$ is convex.
        \item The function $\Psi = f+g$ is coercive (meaning that $\Psi(X)\rightarrow + \infty$ when $\|X\|\rightarrow +\infty$) and semialgebraic.
    \end{enumerate}
Then, the  sequence $\{\Psi(X^k)\}_{k \geq 0}$ is nonincreasing, and the sequence $\{X^k\}_{k \geq 0}$ converges towards a critical point $X^*$ of problem~\eqref{eq:nolips_min_problem}.
\end{theorem}

\begin{proof}
First, the step size $\lambda_k$ can be bounded for $k\geq 0$ as
    \begin{equation}\label{eq:stepsize_bound}
        \frac{1}{2L} \leq \lambda_k \leq \lambda_{\rm max}.
    \end{equation}
Indeed, the upper bound holds by construction of the algorithm. The lower bound comes from the relative smoothness property: condition \eqref{eq:suff_decrease} is true for every $\lambda \in (0, \frac{1}{L}]$, so the inner loop will stop whenever $\lambda$ gets below $1/L$.

    Let us now prove the result. Since Condition \eqref{eq:suff_decrease} holds at each iteration $k$, we can write
    \begin{equation}\label{eq:dec_f}
            f(X^{k+1}) \leq f(X^k) + \la \nabla f(X^k), X^{k+1} - X^k \ra + \frac{1}{\lambda_k} D_h(X^{k+1}, X^k).
    \end{equation}
    On the other hand, the optimality condition characterizing $X^{k+1} = T_{\lambda_k}(X^k)$ writes
    \begin{equation}\label{eq:cond_optim}
        0 \in \lambda_k \left(\partial g(X^{k+1}) + \nabla f(X^k) \right) + \nabla h(X^{k+1}) - \nabla h(X^{k}),
    \end{equation}
    where $\partial g$ denotes the subdifferential of the convex function $g$. Combining \eqref{eq:cond_optim} with the subgradient inequality for $g$ yields
    \begin{multline}\label{eq:dec_g}
            g(X^{k+1})\leq g(X^k) +\frac{1}{\lambda_k} \la \nabla h(X^k) - \nabla h(X^{k+1}), X^{k+1} - X^k \ra \\
            - \la \nabla f(X^k), X^{k+1} - X^k \ra.
    \end{multline}
    Summing \eqref{eq:dec_f} and \eqref{eq:dec_g} gives
    \begin{equation*}\label{eq:descent_ineq}
        \Psi(X^{k+1}) \leq \Psi(X^k) + \frac{1}{\lambda_k} \big[ D_h(X^{k+1}, X^k)  + \la \nabla h(X^k) - \nabla h(X^{k+1}), X^{k+1} - X^k  \ra \big],
    \end{equation*}
    which yields 
    \begin{equation}\label{eq:desc_cond}
       \Psi(X^{k+1}) \leq \Psi(X^k) - \frac{1}{\lambda_k} D_h(X^{k}, X^{k+1}).
    \end{equation}
From this inequality, we can now prove the same convergence properties as for the standard NoLips scheme.
 Indeed, the monotonicity of the sequence $\{\Psi(X^k)\}_{k \geq 0}$ is a direct consequence of the above. Since $\lambda_k \leq \lambda_{\rm max}$, it follows that at every iteration $k \geq 0$,
\begin{equation*}
    \Psi(X^{k}) - \Psi(X^{k+1}) \geq \frac{1}{\lambda_{\rm max}} D_h(X^k, X^{k+1}).
\end{equation*}

    Now, this inequality is the same as the one needed to prove convergence in the case of the fixed step size in \cite{Bolte2018}. Thus, global convergence towards a critical point is a consequence of \cite[Th.~4.1]{Bolte2018}, since all the assumptions are met:  
    the kernel $h$ is defined over the entire space $\reals^{n \times r}$, it is strongly convex, and $\nabla h$ is Lipschitz continuous on bounded subsets of $\reals^{n \times r}$ (because we assumed it is $C^2$). We also need the fact that the sequence $\{X^k\}_{k \geq 0}$ is bounded, which is a consequence of the monotonicity of $\{\Psi(X^k)\}_{k \geq 0}$   and the fact that the function $\Psi$ is coercive.\qed
\end{proof}

The semialgebraicity assumption is needed to establish the crucial nonsmooth Lojasiewicz property \cite{Bolte2007}, required to show convergence to a critical point. It holds for all the applications we cited, since the class of semialgebraic functions includes polynomial functions, $\ell_1$ and $\ell_2$ norms, the $\ell_0$ seminorm and indicators of polynomial sets.

\section{Applications}\label{s:applications}
We now illustrate applications of our methodology to two different low-rank problems, symmetric nonnegative matrix factorization and Euclidean distance matrix completion.
We show that good numerical performance can be reached using the dynamical step strategy, and that, for Euclidean matrix completion, it can be further improved by using the Gram kernel.

\subsection{Symmetric Nonnegative Matrix Factorization}\label{ss:SymNMF}
Symmetric Nonnegative Matrix Factorization (SymNMF) is the task of finding, given a symmetric nonnegative matrix $M \in \reals^{n\times n}$, a nonnegative matrix $X\in \reals^{n\times r}$ such that 
$M \approx XX^T$. This is done by solving
\BEQ\label{eq:symnmf}\tag{SymNMF}
\BA{ll}
\mbox{min} & \frac{1}{2} \|M - X X^T\|_F^2\\    
\mbox{subject to} & X \geq 0
\EA\EEQ
in the variable $X\in\reals^{n \times r}$, where the inequality constraint is meant componentwise and $r \leq n$ is the target rank.

\eqref{eq:symnmf} is used as a probabilistic clustering or graph clustering technique \cite{Ding2005,He2011}. Numerical experiments by \cite{Kuang2015} have shown that it achieves state-of-the-art clustering accuracy on several text and image datasets.

\subsubsection{Solving SymNMF.} While \eqref{eq:symnmf} looks similar to the well-known asymmetric NMF problem $\min_{X,Y} \frac{1}{2}\|M - XY^T\|$, it is actually harder. This is because NMF has a favorable block structure that allows the application of efficient alternating algorithms \cite{Controller2013,Cichocki2009}. SymNMF, however, does not enjoy the same block structure. Current solvers fall into two categories:

{\em Direct solvers.} There have been several attempts at solving the original problem, including multiplicative update rules \cite{He2011}, projected gradient algorithm quasi-Newton schemes \cite{Kuang2015}, and coordinate descent \cite{Vandaele2016}.
    
{\em Nonsymmetric relaxations.} Another  idea is  to use a mere penalty method \cite{Kuang2015,Lu2017,zhu2018a}, relaxing \eqref{eq:symnmf} to the following penalized nonsymmetric problem
\BEQ\label{eq:pnmf}\tag{P-NMF}
\BA{ll}
\mbox{minimize} & \frac{1}{2} \|M - X Y^T\|_F^2 + {\mu} \|X - Y\|_F^2\\
\mbox{subject to} & X,Y \geq 0,
\EA
\EEQ
in the variables $X, Y \in \reals^{n \times r}$, with parameter $\mu \geq 0$. This formulation is very similar to asymmetric NMF and can be solved by the same fast alternating algorithms that exploit the block structure, such as Alternating Nonnegative Least Squares (ANLS) and Hierarchical Alternating Least Squares \cite{zhu2018a} (HALS), which are arguably the fastest SymNMF solvers.


\paragraph{Applying NoLips} We propose to apply NoLips for optimizing the original objective function.
Problem \eqref{eq:symnmf} falls within our framework with ${F(Y) = \frac{1}{2}\|M-Y\|^2}$, which has a Lipschitz gradient with constant $1$, and $g(X) = i_{\{X \geq 0\}}$ the indicator function of the nonnegative orthant. Therefore, Proposition \ref{prop:rel_smoothness} implies that $f(X) :=  \frac{1}{2}\|M-XX^T\|^2$ is $1$-smooth relatively to the kernel $h_N$ with $\alpha = 6$ and $\sigma = 2\|\nabla F(0)\| = 2\|M\|$. 
Since, in addition, $f$ is polynomial and $g$ is the indicator of a polynomial set, $f+g$ is semialgebraic, and it is also coercive, so Theorem \ref{thm:nolips} guarantees that NoLips will converge towards a stationary point of problem \eqref{eq:symnmf}.

In this problem, the Bregman iteration map is solved by simply adding a projection step
\[
 T_\lambda(X) = \frac{1}{\tau_\sigma(\alpha \|\Pi_+(U)\|^2)}\Pi_+(U),
\]
where $U = \nabla h_N(X) - \lambda \nabla f(X)$, $\tau_\sigma$ has been defined in Proposition \ref{prop:mirror_norm} and $\Pi_+$ is the projection on the nonnegative orthant: $\Pi_+(U) = \max(U, 0)$ (entrywise).

{\paragraph{Computational complexity for NoLips.} The computational complexity of an iteration is dominated by gradient computations and objective function evaluations, as all other operations are linear in the size of the variable.

If $M$ is a $n \times n$ \textbf{dense} matrix, each gradient and function evaluation uses $O(n^2 r + n r^2)$ floating point operations. If $M$ is represented as a \textbf{sparse} matrix with $p << n^2$ nonzero elements, then we can take advantage of this structure \cite[Rmk. 2]{Vandaele2016} by using
\begin{equation}
    \begin{split}
        f(X) &= \frac{1}{2}\|XX^T-M\|^2 = \frac{1}{2} \|M\|^2 + \frac{1}{2}\|X^T X\|^2 - \la MX, X \ra \\
        \nabla f(X) &= 2 X (X^T X) - 2 MX
    \end{split}
\end{equation}
which yields a much improved $O\left( (r^2 + p) n \right) $ complexity per iteration.}

\subsubsection{Numerical experiments}
We implemented the following algorithms: Algorithm~\ref{algo:nolips} with dynamical step size and the norm kernel (\texttt{Dyn-NoLips}), the  \texttt{$\beta$-SNMF} scheme from \cite{He2011}, where we set $\beta = 0.99$ as advised by the authors, the projected gradient algorithm (\texttt{PG}) with Armijo line search from  \cite{Kuang2015}, where we use the line search parameters $\beta = 0.1$ and $\sigma = 0.01$, the coordinate descent scheme (\texttt{CD}) from \cite{Vandaele2016}, {the \texttt{ADMM} algorithm \cite{Lu2017}}, and the two fast algorithms from \cite{zhu2018a} for solving the penalized problem \eqref{eq:pnmf}: \texttt{SymANLS} and \texttt{SymHALS}. For the last two, we tuned the $\mu$ penalization parameter for best performance.  We left out the quasi-Newton algorithm from \cite{Kuang2015} because of its prohibitive $O(n^3)$ complexity for large datasets.

All algorithms were implemented in Julia \cite{Bezanson2017} which is a highly-optimized numerical computing language. Since our algorithms have different complexity per iteration, it is essential to compare them in terms of running time, and Julia provides a fairly accurate way to do so as there is little interpreter overhead in loops.\footnote{Tests were run on a PC Intel CORE i7-4910MQ CPU @ 2.90 GHz x 8 with 32 Go RAM.}

We used two image and two text datasets.
\begin{itemize} \itemsep 0ex
    \item \textbf{Image.} 
    \begin{itemize}
        \item \textbf{CBCL}\footnote{http://cbcl.mit.edu/software-da\-tasets/Face\-Data2.html}: 2,429 images of faces of size $19 \times 19$ 
        
        \item \textbf{Coil-20}\footnote{http://www.cs.columbia.edu/CAVE/software/softlib/\-coil-20.php}: 1440 images of size $128 \times 128$ representing 20 objects under various angles.
    \end{itemize}

    \item \textbf{Text.}
        \begin{itemize}
            \item \textbf{TDT2}\footnote{http://www.cad.zju.edu.cn/home/dengcai/Data/TextData.html \label{footnote:tdt2}}: dataset of 11,201 news articles classified in 96 semantic categories. We used the version provided by Cai et al. \cite{CWH09,CMHZ08,CHZH07,CHH05}, which has been restricted to the largest 30 categories, leaving a total of 9,394 documents.
            \item \textbf{Reuters}\textsuperscript{\ref{footnote:tdt2}}: dataset of news articles, which we restricted to the largest 25 categories, leaving a total of 7,963 documents. 
        \end{itemize}
\end{itemize}

For all image and text datasets, we construct a sparse similarity matrix $M$ following the procedure described in \cite[Section 7.1]{Kuang2015}. We begin by computing the similarity graph between data points, using cosine similarity on term frequency vectors for text, and a Gaussian kernel for image (with the self-tuning method for the scale). The graph obtained is \textit{sparsified} by keeping only the edges connecting the k-nearest neighbors, with $k = \lfloor \log_2 n \rfloor + 1$. Then, $M$ is taken as a normalized version of the graph adjacency matrix.

{We use the usual convergence criterion for constrained nonconvex problems
 \begin{equation}\label{eq:stop_crit}
 \frac{\|\nabla^P f(X^k)\|}{\|\nabla^P f(X^0)\|}\leq \epsilon
 \end{equation}
 where $\nabla^P f(X)$ is the projected gradient defined as
 \[
 (\nabla^P f(X))_{ij} = \begin{cases}
\nabla f(X)_{ij}&\text{ if } X_{ij} > 0,\\
\min \left(\nabla f(X)_{ij}, 0\right)& \text{ if } X_{ij} = 0.
\end{cases}
 \]
 }
 
\begin{table}
{
    \centering
    
    \caption{CPU time (in seconds) needed to reach a decrease of $\epsilon = 10^{-3}$ in projected gradient norm (see \eqref{eq:stop_crit} for definition). Results have been averaged over 10 random initializations. Hyperparameters for SymHALS, SymANLS and ADMM have been tuned for best performance. Missing values indicate failure of convergence.}
    
    \begin{tabular}{|l|c|*{7}{>{\centering\arraybackslash}p{1cm}}|}
        \hline
        Dataset & r & {\scriptsize NoLips} & {\scriptsize PG} & {\scriptsize Beta} & {\scriptsize CD} & {\scriptsize SymHALS} & {\scriptsize SymANLS} & {\scriptsize ADMM} \\
        \hline
        \multirow{4}{*}{Coil-20}   & 10 &  24.7 &  51.4 & - &  26.2 &  7.0 &  32.3 &  -  \\
 & 20 &  23.7 &  36.8 & - &  21.3 &  4.0 &  18.2 & -  \\
 & 30 &  20.7 &  40.8 & - &  35.4 &  6.5 &  20.2 & -  \\
 & 40 &  21.7 &  49.5 & - &  57.6 &  7.5 &  28.4 &  -  \\
        \hline
        \multirow{4}{*}{CBCL} 
 
 & 10 &  38.2 &  42.7 &  44.0 &  35.6 &  13.6 &  35.2 &  42.8  \\
 & 20 &  57.7 &  88.4 &  - &  93.9 &  17.8 &  47.8 &  -  \\
 & 30 &  60.9 &  134.3 &  - &  135.0 &  15.1 &  43.4 &  -  \\
 & 40 &  50.8 &  126.4 &  - &  90.0 &  23.7 &  52.5 &  -  \\
        \hline
        \multirow{4}{*}{TDT2} 
        
 & 10 &  35.2 &  54.2 &  - &  97.5 &  11.0 &  - &  -  \\
 & 20 &  52.4 &  76.1 &  - &  109.9 &  20.1 &  - &  -  \\
 & 30 &  29.4 &  45.1 &  - &  - &  12.1 &  - &  -  \\
 & 40 &  28.0 &  49.8 &  - &  - &  17.7 &  - &  -  \\
        \hline
        \multirow{4}{*}{Reuters} 
        & 10 &  6.5 &  10.0 &  - &  33.0 &  3.0 &  54.2 &  -  \\
 & 20 &  28.7 &  32.8 &  - &  71.7 &  9.5 &  74.7 &  -  \\
 & 30 &  24.3 &  45.5 &  - &  69.4 &  6.5 &  91.0 &  -  \\
 & 40 &  40.2 &  68.5 &  - &  83.2 &  10.6 &  108.3 &  -  \\
        \hline
    \end{tabular}
    
    \label{tab:symnmf_runtime}
}
\end{table}

\begin{figure}

  \centering
  \subfigure[COIL-20 (image) $n = 1440$, $r = 20$]{
    \includegraphics[width = 0.43\columnwidth]{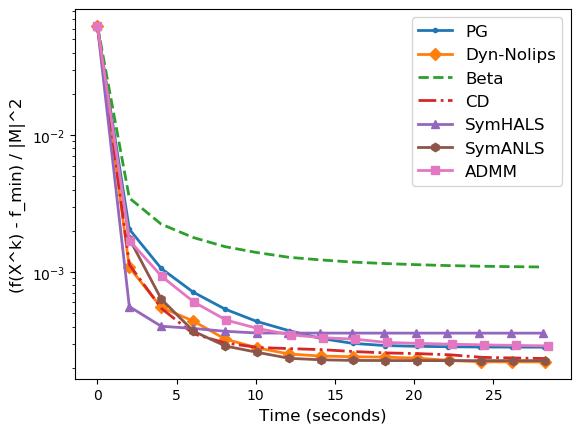}
  }
  \quad
  \subfigure[CBCL (image) $n = 2429$, $r = 20$]{
    \includegraphics[width = 0.43\columnwidth]{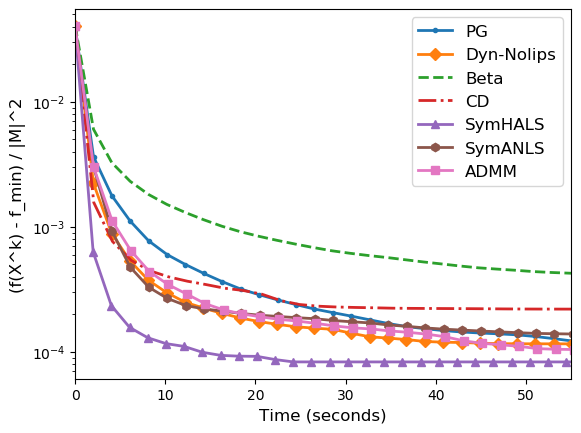}
}
  
  \centering
  \subfigure[TDT2 (text) $n = 9394$, $r = 30$]{
    \includegraphics[width = 0.43\columnwidth]{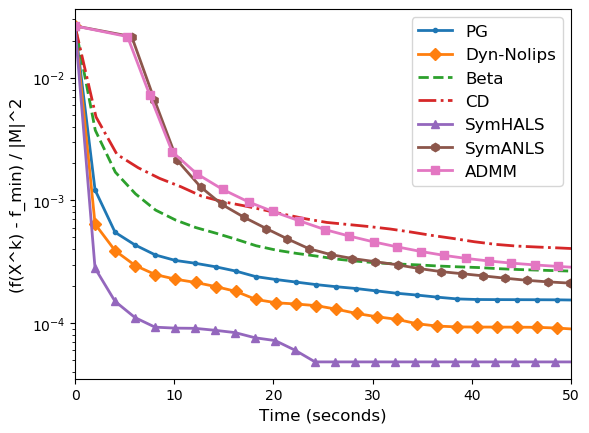}
    \label{fig:tdt2}
  }
  \quad
  \subfigure[Reuters (text) $n = 7963$, $r = 30$]{
    \includegraphics[width = 0.43\columnwidth]{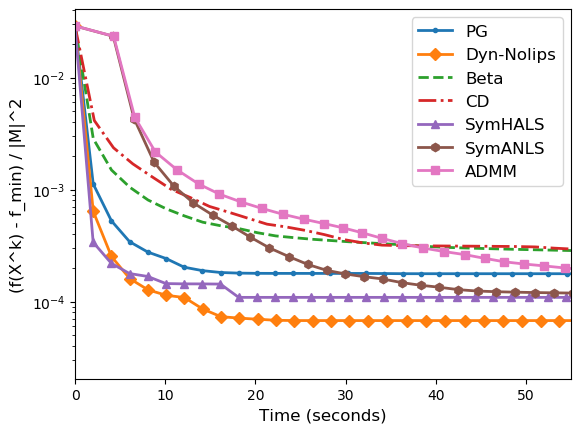}
    \label{fig:reuters}
  }

\caption{ {SymNMF normalized objective gap $\left(f(X^k) - f_{\rm min} \right) / \|M\|^2$ averaged over 10 random initializations, for various sparse similarity matrices $M \in \reals^{n \times n}$. Hyperparameters for SymHALS, SymANLS were tuned for best performance, while Dyn-NoLips is parameter-free.}}
\label{fig:symnmf}
\end{figure}

{Table \ref{tab:symnmf_runtime} reports the average time needed to reach a convergence criterion of $\epsilon = 10^{-3}$, for 10 random initializations. For each dataset, we test several values for the rank parameter $r$. In addition, Figure 1 shows the average evolution of the normalized objective gap $\left(f(X^k) - f_{min} \right) / \|M\|^2$, where $f_{min}$ is the minimal objective value encountered in all initializations.

}

{ Overall, the algorithm that shows the best convergence speed is \texttt{SymHALS}, but it has the disadvantage of needing to tune the penalization parameter $\mu$. In the experiments we report, small values of $\mu$ yielded optimal performance, while the convergence theory of \cite{Zhu2018b} only holds for large values for which the algorithm is much slower. By contrast, \texttt{Dyn-NoLips} is hyperparameter-free and has the second best overall performance. The gap with the other methods is particularly significant on the larger \texttt{TDT2} and \texttt{Reuters} datasets, showing that the method scales well with problem dimension. 


}

\begin{figure}
  \centering
  \subfigure[$n= 2000,r=3$]{
    \includegraphics[width = 0.4\columnwidth]{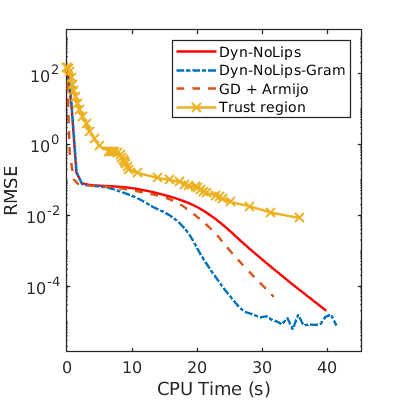}
  }
  \quad
  \subfigure[$n= 5000, r = 3$]{
    \includegraphics[width = 0.4\columnwidth]{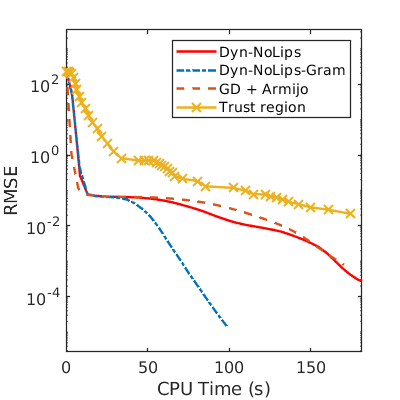}
  }

\caption{Euclidean matrix completion problems on the \texttt{Helix} dataset, with 10\% known distances and two different problem sizes. We present the normalized RMSE over the full distance matrix versus CPU time. The results are averaged over 10 random initializations.}
\label{fig:mc}
\end{figure}

\subsection{Euclidean Distance Matrix Completion}\label{ss:MC}
Euclidean distance matrix completion (EDMC) is the task of recovering the position of $n$ points $x_1^*,\dots,x_n^* \in \reals^r$, given the knowledge of a partial set of pairwise distances $d_{ij} = \|x_i^* - x_j^*\|^2$ for $(i,j) \in \Omega$, where $\Omega \subset [1,n]\times[1,n]$. It is a fundamental problem with applications in sensor network localization and the study of conformation of molecules; see \cite{Fang2012,Qi2013,Dokmanic2015} and references therein.
\newcommand\sumom{\sum_{i,j \in \Omega}}
The Burer-Monteiro nonconvex formulation for solving this problem writes
\begin{equation}\label{eq:edmc}\tag{EDMC}
 \min f(X) := \frac{1}{2}\sum_{(i,j)\in \Omega} \left( \|X_i - X_j\|^2 - d_{ij} \right)^2
\end{equation}
in the variable $X\in \reals^{n \times r}$. It can be rewritten $f(X) = \frac{1}{2} \|\mathcal{P}_\Omega(\kappa(XX^T)-D) \|^2$ where $D$ is the matrix of known distances, $\mathcal{P}_\Omega$ denotes the projection operator such that $\mathcal{P}_\Omega(Y)_{ij} = Y_{ij}$ if $(i,j) \in \Omega$, and $\mathcal{P}_\Omega(Y)_{ij} = 0$ elsewhere, and $\kappa$ is the linear operator defined for $Y \in \reals^{n\times n}$ by
\begin{equation}
    \kappa(Y)_{ij} = Y_{ii} + Y_{jj} - 2 Y_{ij} \,\, \text{for} \,\, 1\leq i,j \leq n
\end{equation}
\paragraph{Applying NoLips with the norm kernel.} Problem \eqref{eq:edmc} falls within our framework with $F(Y) = \frac{1}{2}\|\mathcal{P}_\Omega(\kappa(Y)-D)\|^2$, which can be shown to have a Lipschitz gradient with constant 
\[L_{EDM} :=  9 \max_{i=1\dots n} |\{j| (i,j) \in \Omega\}|.\] 
Therefore, as in the case of SymNMF, the norm kernel $h_N$ can be used with an initial step size $1$ and parameters $\alpha = 6L_{EDM}$ and $\sigma = \frac{1}{3}\|\nabla F(0)\| = 2\|\mathcal{P}_\Omega(D)\|$.

\paragraph{Using the Gram kernel} As the problem is unconstrained, we can also apply minimization using the Gram kernel $h_G$. We use the parameters $\alpha = 2 L_{EDM}$, ${\beta = L_{EDM}}$ and $\sigma = 2 \|\mathcal{P}_\Omega(D)\|$, which ensure that $f$ is $1$-smooth relatively to $h_G$ by Proposition \ref{prop:rel_smoothness_gram}.

{ \paragraph{Computational complexity for NoLips.} As before, the main computational bottleneck for an iteration consists in computing the value and gradient of the objective function. If $p = |\Omega|$ denotes the number of known distances, then the computational complexity is $O(pr)$. If the Gram kernel is used, each iteration requires an additional $O(nr^2 + r^3)$ flops (see Section \ref{ss:gram}), which is negligible compared to the latter in the usual setting where $p >> n$ and $r$ is small.}

\paragraph{Numerical experiments}  We implement the following algorithms: NoLips with a dynamical step size and the norm kernel (\texttt{Dyn-NoLips}), NoLips with a dynamical step size and the Gram kernel (\texttt{Dyn-NoLips-Gram}), gradient descent with Armijo line search (\texttt{GD}), the Riemannian trust region algorithm from \cite{Mishra2011} (\texttt{TR}). We leave out semidefinite relaxations because of their memory requirement which is prohibitive on large data.
As the implementation for \texttt{TR} is provided in Matlab, we run our experiments on Matlab as well, with the same setup as in Section \ref{ss:SymNMF}.

 We try the algorithms on a standard EDMC problem, the 3-dimensional \texttt{Helix} dataset \cite{Mishra2011} which is generated as $X_i = (\cos(3t_i),\sin(3t_i),2t_i)$ where $\{t_i\}_{i=1}^n$ are sampled uniformly in $[0,2\pi]$. We randomly keep only 10 \% on the pairwise distances, and test on two different problem sizes: $n = 2000$ and $n= 5000$. Figure \ref{fig:mc} reports the normalized root mean squared error (RMSE) over \textit{all distances} (known and unknown) averaged on $10$ random initializations. All the algorithms manage to recover the ground truth; the \texttt{Dyn-NoLips-Gram} algorithm shows the best numerical performance, which demonstrates the advantage of using the Gram geometry. 
 
\section{Conclusion}
We proposed a generic approach for solving Burer-Monteiro formulations of low-rank minimization problems using the methodology of Bregman gradient methods and relative smoothness.
We studied two quartic kernels, including a new Gram kernel, and demonstrated their benefits on numerical experiments. In future work, performance could be improved further by studying inertial variants \cite{Mukkamala2019,Hanzely2018}.  New kernels could also be explored beyond the class of quartic functions to tackle other problems with inherent non-Euclidean geometries.

\section*{Code}
The code for reproducing experiments for SymNMF and Euclidean Distance Matrix Completion can be downloaded from the public repository \\{\small \url{https://github.com/RaduAlexandruDragomir/QuarticLowRankOptimization}}

\section*{Acknowledgments}  
{\small
The authors would like to thank the anonymous reviewers for their insightful comments.

Radu-Alexandru Dragomir would like to acknowledge support from an AMX fellowship, the Air Force Office of Scientific Research, Air Force Material Command, USAF, under grant number FA9550-18-1-0226, as well as from S\'ebastien Gadat. 

J\'{e}r\^{o}me Bolte was partially supported by ANR-3IA Artificial and Natural Intelligence Toulouse Institute, and Air Force Office of Scientific Research, and Air Force Material Command, USAF, under grant numbers FA9550-18-1-0226 \& FA9550-19-1-7026.

AA is at CNRS \& d\'epartement d'informatique, \'Ecole normale sup\'erieure, UMR CNRS 8548, 45 rue d'Ulm 75005 Paris, France,  INRIA  and  PSL  Research  University. AA acknowledges support from the French government under management of Agence Nationale de la Recherche as part of the "Investissements d'avenir" program, reference ANR-19-P3IA-0001 (PRAIRIE 3IA Institute), the ML \& Optimisation joint research initiative with the fonds AXA pour la recherche and Kamet Ventures, as well as a Google focused award.}

\appendix
\section*{Appendix}

\section{Solving the Subproblem for Computing the Bregman Iteration Map of the Gram Kernel}
\label{ss:gram_kernel_algo}
While it seems that computing the Bregman iteration map of the Gram kernel involves solving another difficult quartic subproblem, it is actually of small size ($r$ is typically not larger than a few dozens) and can be solved efficiently with the NoLips scheme. 
 
Indeed, the objective function $\phi$ of problem \eqref{eq:gram_subprob_reminder} is 1-smooth relatively to the norm kernel in $\reals^r$ $h_N(x) = \frac{\alpha_u}{4}\|x\|^4 + \frac{\sigma_u}{2}\|x\|^2$ with a choice of parameters $\alpha_u = \alpha + 3 \beta$ and $\sigma_u = \sigma$.

Algorithm \ref{algo:gram_kernel} details the procedure. We initialize $\mu$ with the values for the previous iteration of the outer procedure. This proves to be efficient as the values will not vary much from one iteration to another. For the stopping criterion, we use the scaled gradient norm $\|\nabla \phi(v)\|/\|\eta\|$ and a tolerance value $\epsilon = 10^{-6}$.

The subproblem being very well conditionned, it is minimized easily; in numerical experiments, it usually convergences in no more than $20$ iterations.

\begin{algorithm}[H]
	\begin{algorithmic}
		\STATE {\bfseries Input:} Matrix $X \in \reals^{n \times r}$, gradient of the objective $\nabla f(X)$, step size $\lambda > 0$, parameters $\alpha, \beta,\sigma > 0$, subproblem tolerance $\epsilon$, and (optionally), values $\mu^-$ of $\mu$ computed at the previous iteration.
		\STATE
		\STATE Form $V = \nabla h_G(X) - \lambda \nabla f(X) =  \left(\alpha \|X\|^2 I_r + \beta X^{T} X + \sigma I_r \right) X - \lambda \nabla f(X)$ 
		\STATE Compute $V^T V$
		\STATE Form the eigendecomposition of $V^T V = P^T D P$ where $P \in \mathcal{O}_r$ and $D = \diag(\eta_1^2,\dots,\eta_r^2)$
		\STATE Initialize $\mu$ as $\mu^-$ if provided, and as $(0,\dots,0)$ otherwise.
        \REPEAT
            \STATE Compute $\nabla \phi(\mu)$ where $\nabla \phi(\mu)_i = \alpha \|\mu\|^2 \mu_i + \beta \mu_i^3 + \sigma \mu_i - \eta_i$
            \STATE Compute $\nabla h_N(\mu)$ where $\nabla h_N(\mu)_i = (\alpha + 3 \beta) \|\mu\|^2 \mu_i + \sigma \mu_i$
            \STATE Form $v = \nabla h_N(\mu) - \nabla \phi(u)$
            \STATE Set $\mu \leftarrow \left[\tau_\sigma\left((\alpha + 3 \beta) \|v\|^2\right)\right]^{-1} v$ where $\tau_\sigma$ has been defined in Proposition \ref{prop:mirror_norm}
        \UNTIL{stopping criterion has been satisfied, i.e., $\|\nabla \phi(v)\| / \|\eta\| < \epsilon$}
    \STATE Form $Z = P^T \diag(\mu_1^2,\dots,\mu_r^2) P$
    \STATE Compute $T_\lambda(X) = V \left[\alpha \Tr(Z) I_r + \beta Z + \sigma I_r \right]^{-1} $
    \STATE {\bfseries Output}: Bregman gradient iterate $T_\lambda(X)$
	\end{algorithmic}
	\caption{Computing the Bregman iteration map of the Gram kernel}
	\label{algo:gram_kernel}
\end{algorithm}

\bibliographystyle{unsrt}
\bibliography{library,lib_jerome,datasets}

\end{document}